\setlist[enumerate,1]{label=\textup{(\arabic*)}}
\tikzset{node distance=2cm, auto}
\renewcommand*{\PrintDOI}[1]{\href{http://dx.doi.org/\detokenize{#1}}{doi: \detokenize{#1}}}
\newtheorem{lemma}{Lemma}[section]
\newtheorem{proposition}[lemma]{Proposition}
\newtheorem{theorem}[lemma]{Theorem}
\theoremstyle{definition}
\newtheorem{definition}[lemma]{Definition}
\theoremstyle{remark}
\newtheorem{example}[lemma]{Example}
\newcommand{\longref}[2]{\hyperref[#2]{#1~\textup{\ref*{#2}}}}
\newcommand*{\nb}{\nobreakdash}
\newcommand*{\alb}{\hspace{0pt}}
\newcommand*{\Star}{$^*$\nb-\alb{}} 
\newcommand*{\Cst}{\mathrm C^*}
\newcommand*{\defeq}{\mathrel{\vcentcolon=}}
\newcommand{\congto}{\xrightarrow{\simeq}}
\newcommand{\full}{\mathrm{full}}
\newcommand{\prop}{\mathrm{prop}}
\newcommand{\strict}{\mathrm{str}}
\newcommand{\assoc}{\mathrm{ass}} 
\newcommand{\id}{\mathrm{id}}
\newcommand{\injto}{\hookrightarrow}
\DeclarePairedDelimiter{\abs}{\lvert}{\rvert}
\DeclarePairedDelimiter{\ket}{\lvert}{\rangle}
\DeclarePairedDelimiterX{\braket}[2]{\langle}{\rangle}{#1\,\delimsize\vert\,\mathopen{}#2}
\DeclareMathOperator{\Map}{Map} 
\newcommand{\N}{\mathbb N} 
\newcommand{\Comp}{\mathbb K} 
\newcommand*{\Hilm}[1][E]{\mathcal #1}
\newcommand{\Mult}{\mathcal M}
\newcommand{\Cat}[1][C]{\mathcal #1} 
\newcommand*{\Cstcat}{\mathcal C^*} 
\newcommand{\Corr}{\mathfrak{Corr}} 
\newcommand*{\Simp}[1]{{\vartriangle^{\!#1}}} 
\newcommand*{\Subs}[1]{\mathrm{Sub}{[#1]}}
\newcommand*{\Subdiv}[1]{\operatorname{Bar}{\vartriangle^{\!#1}}}
\newcommand*{\CSubdiv}[1]{\operatorname{\overline{Bar}}{\vartriangle^{\!#1}}}
\tikzset{cdar/.style=->,auto}
\tikzset{dar/.style={double,double equal sign distance,-implies}}
\tikzset{mid/.style={anchor=mid}} 
\tikzset{deq/.style={double,double equal sign distance,-}}
\tikzset{triar/.style={anchor=mid,->}}
\tikzset{tridar/.style={anchor=mid,double,double equal sign
distance,-implies}}
\tikzset{narrowfill/.style={inner sep=0pt, fill=white}}
\newcommand*{\twotriangle}[8][1]{\begin{tikzpicture}[scale=#1,baseline=(current bounding box.west)]
    \node (0) at (210:1) {$#2$};
    \node (1) at (90:1) {$#3$};
    \node (2) at (330:1) {$#4$};
    \draw[triar] (0) -- node[fill=white] (01) {$#5$} (1);
    \draw[triar] (1) -- node[fill=white] (12) {$#6$} (2);
    \draw[triar] (0) -- node[fill=white] (02) {$#7$} (2);
    \draw[tridar](1) -- node[near end,narrowfill] (012) {$#8$} (02);
  \end{tikzpicture}}
\begin{document}
\title{Bicategories of C*-correspondences as Dwyer--Kan localisations}
\author{Ralf Meyer}
\email{rmeyer2@uni-goettingen.de}
\address{Mathematisches Institut\\
  Universit\"at G\"ottingen\\
  Bunsenstra\ss e 3--5\\
  37073 G\"ottingen\\
  Germany}

\keywords{C*-correspondence bicategory; \(\infty\)-category; localisation; corner embedding}

\subjclass{46L08 (primary); 18N40, 18D05 (secondary)}

\begin{abstract}
  We show that the bicategory of proper correspondences is the
  Dwyer--Kan localisation of the category of C*-algebras at a certain
  class of \Star{}homomorphisms.
\end{abstract}
\maketitle

\section{Introduction}
\label{sec:intro}

The correspondence bicategory is a bicategory that has
\(\Cst\)\nb-algebras as objects, correspondences as arrows, and
isomorphisms of correspondences as \(2\)\nb-arrows.  In this article,
we use the conventions in~\cite{Buss-Meyer-Zhu:Higher_twisted}, that
is, an \(A,B\)-correspondence is an arrow from~\(A\) to~\(B\) and the
composition is the balanced tensor product of
\(\Cst\)\nb-correspondences in reverse order.  The correspondence
bicategory was first introduced to study Morita--Rieffel equivalences
of \(\Cst\)\nb-algebras because the latter are just the equivalences
in this bicategory (see \cites{Brouwer:Bicategorical,
  Landsman:Bicategories}).  It allows to formulate what it means for a
group to act on a \(\Cst\)\nb-algebra by Morita--Rieffel equivalences,
and such an action turns out to be the same as a saturated Fell bundle
(see~\cite{Buss-Meyer-Zhu:Higher_twisted}).  There is a bicategorical
variant of the usual concept of a (co)limit in a category.  The
universal property that defines a bicategorical colimit in the
correspondence bicategory is identical to the universal property of
the crossed product of a group action and equivalent to the universal
property of the Cuntz--Pimsner algebra of a proper
\(\Cst\)\nb-correspondence or a proper product system
(see~\cite{Albandik-Meyer:Colimits}).  In this way, many important
\(\Cst\)\nb-algebraic constructions are unified by viewing them as
colimits of different diagrams in the bicategory of proper
correspondences.  This
point of view allows to clarify the functoriality properties of
relative Cuntz--Pimsner algebras
(see~\cite{Meyer-Sehnem:Bicategorical_Pimsner}).  The correspondence
bicategory has a groupoid analogue, and this allows to study groupoid
models for important examples of Cuntz--Pimsner algebras such as the
\(\Cst\)\nb-algebras of (self-similar, topological, higher-rank)
graphs (see \cites{Albandik:Thesis, Albandik-Meyer:Product,
  Antunes-Ko-Meyer:Groupoid_correspondences,
  Meyer:Groupoid_models_relative}).

In higher category theory, a bicategory is only the first step from a
category to an \(\infty\)\nb-category.  Many constructions in homotopy
theory and homological algebra lead to \(\infty\)\nb-categories right
away.  Thus one may wonder whether the correspondence bicategory may
be enriched further by some even higher arrows, giving a true
\(\infty\)\nb-category.  There is, however, no obvious way to do this.
One aim of this article is to explain why this is so.  Namely, we are
going to show that the quasi-category defined by the proper
correspondences is the Dwyer--Kan localisation of the
category of \(\Cst\)\nb-algebras and \Star{}homomorphisms at the class
of corner embeddings.  This is remarkable because Dwyer--Kan
localisation is a construction whose output is usually an
\(\infty\)\nb-category.  In this case, however, this
\(\infty\)\nb-category is just a \(2\)\nb-category, that is, all
higher arrows are trivial.  Our result gives a universal property of
the proper correspondence bicategory: a functor from the category of
\(\Cst\)\nb-algebras and \Star{}homomorphisms to a quasi-category or
even to an \(\infty\)\nb-category factors through the proper correspondence
bicategory if and only if it is \(\Cst\)\nb-stable, and this
factorisation is unique up to equivalence if it exists.  This
universal property improves upon a known universal property of the
truncation of the proper correspondence bicategory to a category (see
\cite{Meyer:KK-survey}*{Proposition~39}).

The description of the proper correspondence bicategory as a
Dwyer--Kan
localisation also clarifies its relationship to recent constructions
of \(\infty\)\nb-category enrichments of Kasparov's bivariant
KK-theory, which use Dwyer--Kan localisation as well.  Recall that
Kasparov's bivariant KK-theory, viewed as a category, is characterised
uniquely by a universal property.  Namely, the canonical functor from
the category of separable \(\Cst\)\nb-algebras and
\Star{}homomorphisms to the KK-category is the universal
\(\Cst\)\nb-stable, split exact, homotopy invariant functor.  A
functor is \(\Cst\)\nb-stable, split exact, or homotopy invariant if
and only if it maps certain \Star{}homomorphisms to isomorphisms.
Thus KK-theory is a localisation of the category of
\(\Cst\)\nb-algebras.  Recently, \(\infty\)\nb-categorical enrichments
of KK-theory have been constructed by Bunke, Engel and
Land~\cite{Bunke-Engel-Land:Stable_infinity-category_KK} using
Dwyer--Kan localisation.  This leads to a stable
\(\infty\)\nb-category whose homotopy category is equivalent to
KK-theory as a triangulated category.

In this article, we describe \(\infty\)\nb-categories as
quasi-categories, that is, simplicial sets in which all inner horns
may be filled (see \cite{Joyal:Quasi-categories}*{Definition~1.1}).
The nerve of a category or of a bicategory with invertible
\(2\)\nb-arrows is a quasi-category in this sense.  Thus we view the
category of \(\Cst\)\nb-algebras and the correspondence bicategory as
quasi-categories.  A functor of categories or a homomorphism of
bicategories induces a simplicial map between their nerves, and an
equivalence of functors or bicategory homomorphisms induces a homotopy
between these maps.  Our main theorem describes homotopy classes of
simplicial maps from the nerve of the proper correspondence bicategory
to any other quasi-category~\(\Cat[D]\) as homotopy classes of
simplicial maps from the category of \(\Cst\)\nb-algebras
to~\(\Cat[D]\)
that map all corner embeddings to equivalences in~\(\Cat[D]\).  This
universal property is also used to define the Dwyer--Kan localisation
of a quasi-category at a family of morphisms.

Our main theorem treats only proper correspondences because it starts
with \Star{}homomorphisms.  In the final section, we briefly discuss
variants of it that lead to the bicategories
of all correspondences or all full correspondences.  These start with
(full) strictly continuous \Star{}homomorphisms on multiplier
algebras.  In the following, we state our results in the setting of
\(\Cst\)\nb-algebras without extra structure for simplicity.  We may,
however, generalise all our arguments to \(\Cst\)\nb-algebras with
extra structure such as an action of a locally compact groupoid, a
quantum group, or various other kinds of extra structure.  For these
variants, we merely have to ask all \Star{}homomorphisms and
\(\Cst\)\nb-correspondences to be equivariant with respect to the
chosen extra structure.

This article is based on joint work with Rohan Lean a couple of years
ago, which was intended for a book project which is taking an awfully
long time to complete.  I decided that the results in this article
deserve to be
published now because of recent work on Dwyer--Kan localisations of
the category of \(\Cst\)\nb-algebras.

\section{The nerve of the proper correspondence bicategory}
\label{sec:nerve_Corr}

Let~\(\Cstcat_+\) be the category of \(\Cst\)\nb-algebras and
\Star{}homomorphisms.  Let \(N\Cstcat_+\) be its nerve.  This
is~\(\Cstcat_+\) viewed as a quasi-category.  Let \(\Corr_\prop\)
denote the bicategory of proper correspondences
(see~\cite{Buss-Meyer-Zhu:Higher_twisted}, where bicategories are
called weak \(2\)\nb-categories).  It has \(\Cst\)\nb-algebras as
objects, proper \(A,B\)-correspondences as arrows \(A\to B\), and
isomorphisms of \(A,B\)-correspondences as \(2\)\nb-arrows (called
bigons in~\cite{Buss-Meyer-Zhu:Higher_twisted}).  The composition of
arrows is the balanced tensor product of \(\Cst\)\nb-correspondences
in reverse order.  Beware that I am using the conventions
of~\cite{Buss-Meyer-Zhu:Higher_twisted} here, although in more recent
articles, I have advocated to view an \(A,B\)-correspondence as an
arrow from~\(B\) to~\(A\).  It is important to allow only isomorphisms
of \(\Cst\)\nb-correspondences as \(2\)\nb-arrows as
in~\cite{Buss-Meyer-Zhu:Higher_twisted} in order to turn this
bicategory into a quasi-category.

The nerve~\(N\Cat\) of a bicategory~\(\Cat\) is described by
Duskin~\cite{Duskin:Simplicial}, generalising the nerve of a category.
Duskin also proves that it is a quasi-category
(see~\cite{Duskin:Simplicial}).  Duskin's definition leads to the
following description of the nerve:

\begin{definition}
  \label{def:NCorr}
  Let \(N\Corr_\prop\) be the nerve of the bicategory of proper
  correspondences.  An \(n\)\nb-simplex in~\(N\Corr_\prop\) is given
  by \(\Cst\)\nb-algebras \(A_i\) for \(0\le i \le n\),
  \(\Cst\)\nb-correspondences \(\Hilm_{i j}\colon A_i\to A_j\) for
  \(0\le i \le j \le n\), and isomorphisms of
  \(\Cst\)\nb-correspondences
  \(u_{i j k}\colon \Hilm_{i j} \otimes_{A_j} \Hilm_{j k}\to \Hilm_{i
    k}\) for \(0 \le i \le j \le k \le n\), such that
  \begin{enumerate}
  \item \(\Hilm_{ii}\) is the identity correspondence~\(A_i\) and
    \(u_{i i k}\) and \(u_{i k k}\) are the canonical isomorphisms
    \(A_i \otimes_{A_i} \Hilm_{i k} \congto \Hilm_{i k}\) and
    \(\Hilm_{i k} \otimes_{A_k} A_k \congto \Hilm_{i k}\) for
    \(0 \le i \le k \le n\);
  \item if \(0 \le i \le j \le k \le l \le n\), then the following
    diagram of isomorphisms of \(\Cst\)\nb-correspondences commutes:
    \begin{equation}
      \label{eq:n-simplex_correspondence}
      \begin{tikzpicture}[yscale=1.2,xscale=3,baseline=(current
        bounding box.west)]
        \node (12) at (144:1)
        {\(\Hilm_{ij}\otimes_{A_j}(\Hilm_{jk}\otimes_{A_k}\Hilm_{kl})\)};
        \node (12a) at (216:1)
        {\((\Hilm_{ij}\otimes_{A_j}\Hilm_{jk})\otimes_{A_k}\Hilm_{kl}\)};
        \node (2) at (72:1) {\(\Hilm_{ij}\otimes_{A_j}\Hilm_{jl}\)};
        \node (e) at (0:.8) {\(\Hilm_{il}\)}; \node (1) at (288:1)
        {\(\Hilm_{ik}\otimes_{A_k}\Hilm_{kl}\)};
        \draw[deq] (12) -- node[swap]
        {\(\assoc\)} (12a); \draw[dar] (12.north) -- node
        {\(\id_{\Hilm_{ij}}\otimes_{A_j}u_{jkl}\)} (2);
        \draw[dar] (12a.south) -- node[swap]
        {\(u_{ijk}\otimes_{A_k}\id_{\Hilm_{kl}}\)} (1);
        \draw[dar] (2.south east) -- node {\(u_{ijl}\)} (e);
        \draw[dar] (1.north east) -- node[swap] {\(u_{ikl}\)} (e);
      \end{tikzpicture}
    \end{equation}
  \end{enumerate}
  An increasing map \(\varphi \colon [n] \to [m]\) induces a map from
  the set of \(m\)\nb-simplices to the set of \(n\)\nb-simplices by
  mapping \((A_i,\Hilm_{i j},u_{i j k})\) to
  \((A_{\varphi(i)}, \Hilm_{\varphi(i) \varphi(j)}, u_{\varphi(i)
    \varphi(j) \varphi(k)})\).
\end{definition}

A \Star{}homomorphism \(\varphi\colon A\to B\) induces a proper
\(A,B\)\nb-correspondence~\(\Gamma_\varphi\) as follows: its
underlying Hilbert \(B\)\nb-module is the right ideal
\(\Gamma_\varphi\defeq \varphi(A)\cdot B\subseteq B\), with~\(A\)
acting nondegenerately on the left by \(a\cdot b \defeq \varphi(a) b\)
for \(a\in A\), \(b\in \Gamma_\varphi\).  For two composable
\Star{}homomorphisms \(\varphi\colon A\to B\) and \(\psi\colon B\to
C\), there is a canonical isomorphism
\begin{equation}
  \label{eq:Gamma_multiplicative}
  \Gamma_\varphi \otimes_B \Gamma_\psi
  \cong \Gamma_{\psi\circ \varphi},\qquad
  b\otimes c \mapsto \psi(b)c;
\end{equation}
this map is isometric, and
\(\psi\circ \varphi(A)\psi(B)C =\psi\circ \varphi(A)C\) implies that
it is surjective.  The isomorphisms in~\eqref{eq:Gamma_multiplicative}
make
\(\varphi\mapsto \Gamma_\varphi\) into a homomorphism of bicategories
\(\Cstcat_+ \to \Corr_\prop\).  So it induces a simplicial map
\(\Gamma\colon N\Cstcat_+ \to N\Corr_\prop\).

\section{\texorpdfstring{$\Cst$}{C*}-stable functors}
\label{sec:stable_fun}

Let~\(\Cat[D]\) be a quasi-category.  Let \(F\colon N\Cstcat_+\to
\Cat[D]\) be a simplicial map, which we consider as a functor
\(\Cstcat_+\to \Cat[D]\).

\begin{definition}
  \label{def:Cstar-stable_functor}
  For a \(\Cst\)\nb-algebra~\(B\) and a Hilbert
  \(B\)\nb-module~\(\Hilm\), let \(\Hilm\oplus B\) be the direct sum
  of Hilbert modules.  We define the \emph{corner embedding}
  for~\(\Hilm\) to be the canonical inclusion map
  \[
    i_{\Hilm}\colon B\cong\Comp(B)\to \Comp(\Hilm\oplus B).
  \]
  We call a functor \(F\colon N\Cstcat_+\to \Cat[D]\)
  \emph{\(\Cst\)\nb-stable} if \(F(i_{\Hilm})\) is an equivalence for
  each Hilbert module~\(\Hilm\).
\end{definition}

\begin{example}
  The canonical functor \(\Gamma\colon N\Cstcat_+\to N\Corr_\prop\) is
  \(\Cst\)\nb-stable.  To see this, we first identify
  \(\Gamma(i_{\Hilm})\) with the proper correspondence
  \(\Comp(\Hilm\oplus B,B)\).  This is an equivalence, its
  quasi-inverse is \(\Comp(B,\Hilm\oplus B)\).  That is,
  \[
    \Comp(\Hilm\oplus B,B) \otimes_B \Comp(B,\Hilm\oplus B)
    \cong \Comp(\Hilm\oplus B),\qquad
    \Comp(B,\Hilm\oplus B) \otimes_{\Comp(\Hilm\oplus B)}
    \Comp(\Hilm\oplus B,B) \cong B.
  \]
\end{example}

We may also speak of \(\Cst\)\nb-stable functors on subcategories
of~$\Cstcat_+$ such as the subcategory of \(\sigma\)\nb-unital or of
separable
\(\Cst\)\nb-algebras.  In these cases, we may simplify the definition,
replacing the corner embeddings for general Hilbert modules by a
smaller class of ``special'' corner embeddings:

\begin{lemma}
  A functor on the category of \(\sigma\)\nb-unital
  \(\Cst\)\nb-algebras is \(\Cst\)\nb-stable if and only if it maps
  the special corner embeddings \(B\to \Comp(\ell^2\N)\otimes B\),
  \(b\mapsto E_{00} \otimes b\), to equivalences.
\end{lemma}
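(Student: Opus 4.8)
The plan is to prove both implications by recognising the general corner embeddings \(i_{\Hilm}\), up to isomorphism of their target algebras, as composites built from the special corner embeddings. Throughout I write \(H_B\defeq\ell^2\N\otimes B\) for the standard Hilbert \(B\)\nb-module and, for a \(\Cst\)\nb-algebra~\(C\), write \(\iota_C\colon C\to\Comp(\ell^2\N)\otimes C\), \(c\mapsto E_{00}\otimes c\), for the special corner embedding. Since \(\Comp(\Hilm\oplus B)\) is \(\sigma\)\nb-unital exactly when \(\Hilm\) is countably generated, the corner embeddings internal to the category of \(\sigma\)\nb-unital \(\Cst\)\nb-algebras are precisely the \(i_{\Hilm}\) with \(\Hilm\) countably generated, and Kasparov's stabilisation theorem applies to all such~\(\Hilm\). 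For the easy direction I would identify \(\iota_B\) with the corner embedding \(i_{H_B}\): splitting \(\ell^2\N=\C\oplus\ell^2(\N_{\ge1})\) gives \(\Comp(\ell^2\N)\otimes B\cong\Comp(H_B\oplus B)\) carrying the corner \(E_{00}\otimes B\) to the \(B\)\nb-summand, so \(\iota_B\cong i_{H_B}\); hence every \(\Cst\)\nb-stable functor sends \(\iota_B\) to an equivalence.

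For the converse, assume \(F(\iota_C)\) is an equivalence for every \(\sigma\)\nb-unital~\(C\), and fix a countably generated Hilbert \(B\)\nb-module~\(\Hilm\). Set \(E\defeq\Hilm\oplus B\) and \(C\defeq\Comp(E)\), and let \(g\colon C\to\Comp(H_B\oplus E)\) be the corner inclusion onto the summand~\(E\). I would establish two facts, each using only Kasparov stabilisation. First, \(g\circ i_{\Hilm}=i_{H_B\oplus\Hilm}\) as \Star{}homomorphisms, because both embed \(\Comp(B)\) as the \(B\)\nb-corner; since \(H_B\oplus\Hilm\cong H_B\), an isomorphism of targets fixing the \(B\)\nb-summand identifies \(i_{H_B\oplus\Hilm}\) with \(\iota_B\), so \(F(g\circ i_{\Hilm})\) is an equivalence. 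Second, \(g\) itself is isomorphic to \(\iota_C\): the corner of \(g\) is~\(E\) with complement~\(H_B\), the corner of \(\iota_C\) is~\(E\) with complement \(\ell^2(\N_{\ge1})\otimes E\cong\ell^2\N\otimes E\), and \(\ell^2\N\otimes E=(\ell^2\N\otimes\Hilm)\oplus H_B\cong H_B\) by stabilisation; an isomorphism of targets equal to the identity on the \(E\)\nb-corner therefore carries \(g\) to \(\iota_C\), so \(F(g)\) is an equivalence. Finally, since equivalences in a quasi-category satisfy the two-out-of-three property and \(F\) sends the composite \(g\circ i_{\Hilm}\) to \(F(g)\circ F(i_{\Hilm})\), I conclude that \(F(i_{\Hilm})\) is an equivalence.

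The substantive point, and the step I expect to require the most care, is the second identification: one must produce isomorphisms of the target \(\Cst\)\nb-algebras that fix the relevant corner, so that conjugation carries one corner embedding \emph{exactly} onto another rather than merely onto some other corner embedding. This is what lets the argument close instead of reducing a general corner embedding to another general one; the key is that passing to the corner \(C=\Comp(\Hilm\oplus B)\) makes the complementary module \(\ell^2\N\otimes E\) absorb into the standard module by Kasparov stabilisation alone, with no fullness hypothesis needed. The remaining inputs—that \Star{}homomorphisms which are isomorphisms are sent to equivalences, and the two-out-of-three property for equivalences in~\(\Cat[D]\)—are standard.
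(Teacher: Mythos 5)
Your proof is correct, and while it rests on the same two pillars as the paper's proof --- Kasparov's stabilisation theorem and the recognition of suitable extension-by-zero maps as special corner embeddings up to an isomorphism of the target that fixes the corner --- your endgame is genuinely different. The paper builds a chain of three maps
\[
  B\xrightarrow{i_1}\Comp(\Hilm\oplus B)\xrightarrow{i_2}
  \Comp\bigl(\Hilm\oplus B\oplus\ell^2(\N_{\ge1},B)\bigr)\xrightarrow{i_4}
  \Comp\bigl((\Hilm\oplus B)\otimes\ell^2(\N)\bigr),
\]
shows that \(i_2\circ i_1\) and \(i_4\circ i_2\) (and \(i_4\circ i_2\circ i_1\)) are special corner embeddings up to isomorphism, and concludes that \(F(i_1)\) is an equivalence from the Two-Out-Of-Six Lemma. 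Your map \(g\) is the paper's \(i_2\), with the complement \(\ell^2(\N_{\ge1},B)\) replaced by the isomorphic module \(H_B=\ell^2(\N)\otimes B\); but instead of introducing a third map, you observe that \(g\) is itself conjugate to the special corner embedding \(\iota_C\) of the \(\sigma\)\nb-unital algebra \(C=\Comp(\Hilm\oplus B)\), because the complement \(\ell^2(\N_{\ge1})\otimes(\Hilm\oplus B)\) of the corner of \(\iota_C\) satisfies \(\ell^2(\N)\otimes(\Hilm\oplus B)\cong(\ell^2(\N)\otimes\Hilm)\oplus H_B\cong H_B\) by stabilisation. This is exactly the stabilisation isomorphism \((\Hilm\oplus B)\otimes\ell^2(\N)\cong B\otimes\ell^2(\N)\) that the paper uses to make its \emph{triple} composite special, so the analytic input of the two arguments is identical; what your rearrangement buys is that the factorisation \(\iota_B\cong g\circ i_{\Hilm}\) together with \(g\cong\iota_C\) closes the argument using only the two-out-of-three property of equivalences, rather than two-out-of-six. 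Two points you flag are indeed the ones that need care and are handled correctly: the target isomorphisms must fix the relevant corner pointwise (not merely carry corner to corner), and the hypothesis must be applied to the algebra \(C\), not only to \(B\) --- which is legitimate precisely because \(\Hilm\oplus B\) countably generated makes \(\Comp(\Hilm\oplus B)\) a \(\sigma\)\nb-unital object of the category, the same restriction under which the paper runs its argument.
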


\begin{proof}
  The special corner embeddings are the corner embeddings for the
  Hilbert modules \(\Comp(\ell^2(\N_{\ge1})\otimes B)\), and so a
  \(\Cst\)\nb-stable functor maps them to equivalences.  Conversely,
  let us assume that the functor~\(F\) maps the special corner
  embeddings to equivalences.  Let~\(B\) be a \(\sigma\)\nb-unital
  \(\Cst\)\nb-algebra and let~\(\Hilm\) be a Hilbert module such that
  \(\Comp(\Hilm\oplus B)\) is also \(\sigma\)\nb-unital.  This is
  equivalent to \(\Hilm\oplus B\) being countably generated as a
  Hilbert module over~\(B\) (see
  \cite{Lance:Hilbert_modules}*{Proposition~6.7}).  Then Kasparov's
  Stabilisation Theorem applies and shows that
  \(\Hilm \oplus \ell^2(\N_{\ge1},B) \cong \ell^2(\N_{\ge1},B)\) (see
  \cite{Lance:Hilbert_modules}*{Theorem~6.2}).  We consider the
  extension by zero maps
  \[
    B \xrightarrow{i_1} \Comp(\Hilm\oplus B)
    \xrightarrow{i_2} \Comp(\Hilm\oplus B \oplus \ell^2(\N_{\ge1},B))
    \cong \Comp(\ell^2(\N,B)).
  \]
  The composite \(i_3 \defeq i_2\circ i_1\) is a special corner
  embedding, so that \(F(i_3)\) is an equivalence.  Then
  \(F(i_2) \circ F(i_1)\) is an equivalence as well.  We may turn
  \[
    \Comp(\Hilm\oplus B, \Hilm\oplus B \oplus \ell^2(\N_{\ge1},B))
  \]
  into a Hilbert module over \(\Comp(\Hilm\oplus B)\) that contains
  \(\Comp(\Hilm\oplus B)\) as a direct summand, and the compact
  operators on this Hilbert module are \(\Comp(\Hilm\oplus B \oplus
  \ell^2(\N_{\ge1},B))\).
  Thus~\(i_2\) is also a corner embedding.
  The argument above provides another extension-by-zero map
  \(i_4\colon \Comp(\Hilm\oplus B \oplus \ell^2(\N_{\ge1},B)) \to
  \Comp((\Hilm\oplus B) \otimes \ell^2(\N))\) so that \(i_5 \defeq
  i_4\circ i_2\) is a special corner embedding, and then \(F(i_4)
  \circ F(i_2)\) is an equivalence.
  Now the Two-Out-Of-Six Lemma for the composition of \(F(i_4)\), \(F(i_2)\) and \(F(i_1)\) implies that~\(F(i_1)\) is an equivalence.
\end{proof}

For two simplicial sets \(X\) and~\(Y\), let \(\Map(X,Y)\) denote the
simplicial set of simplicial maps \(X\times\Simp{n} \to Y\).  This is
a quasi-category if~\(Y\) is one (see
\cite{Lurie:Higher_Topos}*{Proposition~1.2.7.3.(1)}, proven in
\cite{Lurie:Higher_Topos}*{Section~2.2.5}).  In particular,
\(\Map(N\Cstcat_+,\Cat[D])\) and \(\Map(N\Corr_\prop,\Cat[D])\) are
quasi-categories for any quasi-category~\(\Cat[D]\); their
\(n\)\nb-simplices are simplicial maps
\(N\Cstcat_+\times \Simp{n}\to \Cat[D]\) and
\(N\Corr_\prop\times \Simp{n}\to \Cat[D]\), respectively.  We are
interested in the full subcategory \(\Map_\Comp(N\Cstcat_+,\Cat[D])\)
of \(\Cst\)\nb-stable simplicial maps \(N\Cstcat_+\to \Cat[D]\); its
\(n\)\nb-simplices are those simplicial maps
\(N\Cstcat_+\times \Simp{n}\to \Cat[D]\) for which the adjunct
simplicial map \(N\Cstcat_+\to \Map(\Simp{n}, \Cat[D])\) is
\(\Cst\)\nb-stable.

\begin{theorem}
  \label{the:Corr_prop_universal}
  Let~\(\Cat[D]\) be a quasi-category.  The following simplicial map
  is a homotopy equivalence:
  \[
    \Gamma^*\colon \Map(N\Corr_\prop, \Cat[D]) \to
    \Map_\Comp(N\Cstcat_+,\Cat[D]),\ F\mapsto F\circ \Gamma.
  \]
\end{theorem}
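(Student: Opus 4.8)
The plan is to prove that $\Gamma^*$ is a homotopy equivalence by constructing an explicit homotopy inverse $\Psi$ out of linking algebras and then checking that both round-trips are equivalent to the identity. First I would record that $\Gamma^*$ is well defined, i.e.\ that it actually lands in $\Map_\Comp(N\Cstcat_+,\Cat[D])$: for any simplicial map $F\colon N\Corr_\prop\to\Cat[D]$ the composite $F\circ\Gamma$ sends each corner embedding $i_\Hilm$ to $F(\Gamma(i_\Hilm))$, and $\Gamma(i_\Hilm)$ is an equivalence in $N\Corr_\prop$ by the Example; since any simplicial map of quasi-categories preserves equivalences, $F\circ\Gamma$ is $\Cst$-stable. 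Applying the same remark to the adjunct maps $N\Cstcat_+\to\Map(\Simp{n},\Cat[D])$ shows that $\Gamma^*$ respects the defining $\Cst$-stability condition in every simplicial degree.

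The heart of the argument is the roof construction. Given a proper correspondence $\Hilm\colon A\to B$, I would form the linking algebra $L\defeq\Comp(\Hilm\oplus B)$ together with two \Star{}homomorphisms: the corner embedding $i_\Hilm\colon B\to L$, and the map $\alpha\colon A\to L$ obtained from the (proper, hence compact) left action $A\to\Comp(\Hilm)$ followed by the upper-left corner inclusion $\Comp(\Hilm)\hookrightarrow L$. The identifications in the Example show that in $N\Corr_\prop$ the correspondence $\Hilm$ is recovered as the roof built from $\Gamma_\alpha$ and the inverse of $\Gamma_{i_\Hilm}$. Accordingly, for a $\Cst$-stable $F$ the map $\Psi(F)$ should send $\Hilm$ to $F(i_\Hilm)^{-1}\circ F(\alpha)$, where $F(i_\Hilm)^{-1}$ exists because $F(i_\Hilm)$ is an equivalence. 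I would promote this assignment to a simplicial map by treating an entire $n$-simplex $(A_i,\Hilm_{ij},u_{ijk})$ at once: the associators $u_{ijk}$ let the individual linking algebras be organised into a single coherent diagram of \Star{}homomorphisms in $\Cstcat_+$ indexed by the faces of $\Simp{n}$, whose corner-embedding legs are inverted by $\Cst$-stability; the value of $\Psi(F)$ on the simplex is the image of that diagram under $F$.

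It then remains to produce equivalences $\Gamma^*\circ\Psi\simeq\id$ and $\Psi\circ\Gamma^*\simeq\id$. For $\Gamma^*\circ\Psi$, evaluating on a \Star{}homomorphism $\varphi\colon A\to B$ gives $F(i_{\Gamma_\varphi})^{-1}\circ F(\alpha_\varphi)$, and comparing the roof $(\alpha_\varphi,i_{\Gamma_\varphi})$ of $\Gamma_\varphi$ with $\varphi$ itself is a computation inside $\Cstcat_+$, using the multiplicativity isomorphism \eqref{eq:Gamma_multiplicative}. For $\Psi\circ\Gamma^*$, evaluating $\Psi(G\circ\Gamma)$ on a correspondence $\Hilm$ gives $G(\Gamma_{i_\Hilm})^{-1}\circ G(\Gamma_\alpha)$, which is equivalent to $G(\Hilm)$ precisely because, by the Example, the roof of $\Gamma_\alpha$ against $\Gamma_{i_\Hilm}$ recomposes to $\Hilm$ in $N\Corr_\prop$, and $G$ preserves this.

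The main obstacle is the coherent construction of $\Psi$ in all simplicial dimensions and the verification that the comparison $2$-cells above assemble into genuine simplicial homotopies rather than merely degreewise equivalences. To control this I would exploit that the mapping spaces of $N\Corr_\prop$ are $1$-truncated: the $2$-arrows are exactly the invertible isomorphisms of correspondences and there are no higher cells, so there are no obstructions in degrees $\ge 2$. This reduces the bookkeeping to $\pi_0$ and $\pi_1$ of the mapping spaces. The $\pi_0$-level statement, that the homotopy category of $N\Corr_\prop$ is the localisation of $\Cstcat_+$ at the corner embeddings, is already available from \cite{Meyer:KK-survey}*{Proposition~39}, and I would use it to pin down the effect of $\Psi$ on edges up to isomorphism; the remaining work is to track the isomorphisms of correspondences coherently, which is exactly the data supplied by the associators $u_{ijk}$ in Definition~\ref{def:NCorr}.
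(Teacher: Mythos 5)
Your roof construction is exactly the paper's starting point: your $\alpha$ is the paper's $f_{\Hilm}$, and the recipe ``compose $F(f_{\Hilm})$ with a quasi-inverse of $F(i_{\Hilm})$'' is precisely how the paper obtains $\bar{F}(\Hilm)$ from the triangle in \eqref{eq:triangle_to_fill} via Lemma~\ref{lem:subdivision_back_to_corr}. The genuine gap is in how you propose to make this coherent. Your claim that, because the $2$\nb-arrows of $\Corr_\prop$ are invertible isomorphisms and there are no higher cells, ``there are no obstructions in degrees $\ge 2$'' misplaces where the obstructions live: they live in the \emph{target} $\Cat[D]$, which is an arbitrary quasi-category with arbitrary higher homotopy. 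The Duskin nerve $N\Corr_\prop$ has nondegenerate simplices in every dimension, so a simplicial map $N\Corr_\prop\to\Cat[D]$ must be specified in every dimension; once you have chosen values on edges via roofs and inverses, extending over a $2$\nb- or $3$\nb-simplex amounts to filling a sphere $\partial\Simp{\ell}\to\Cat[D]$, and such fillings are obstructed by the homotopy of the mapping spaces of $\Cat[D]$, which need not vanish. Truncatedness of the source does not remove these obstructions, and the $\pi_0$\nb-level statement of \cite{Meyer:KK-survey}*{Proposition~39} only fixes $\Psi$ on edges up to isomorphism; it contributes nothing to the coherence problem. The paper's proof is engineered precisely to avoid sphere fillings: from an $n$\nb-simplex of $N\Corr_\prop$ it builds the diagram of linking algebras $A_S=\Comp(\Hilm_S)$ indexed by the barycentric subdivision $\Subdiv{n}$ (Lemma~\ref{lem:fST_functor_Subdiv}), then extends it over the enlarged complex $\CSubdiv{n}$ by a double recursion in which every single step is the filling of either an \emph{inner} horn or a \emph{special outer} horn whose distinguished edge is the image of a corner embedding --- both of which always admit fillers --- and the recursion is organised so that all choices are compatible with the face and degeneracy maps of $N\Corr_\prop$.

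There are two further problems with your plan. First, the paper never constructs an explicit homotopy inverse $\Psi$ at all: $\Psi$ would have to be a simplicial map between the mapping quasi-categories, hence defined coherently on families of functors parametrised by $\Simp{m}$, which is yet another layer of coherence. Instead, the paper proves surjectivity on homotopy by applying Proposition~\ref{pro:extend_to_Corrprop} with $\Map(\partial\Simp{m},\Cat[D])$ as target, and injectivity by a relative extension argument (extending a map given on $N\Corr_\prop\times\partial\Simp{m}\cup\Gamma(N\Cstcat_+)\times\Simp{m}$ to all of $N\Corr_\prop\times\Simp{m}$ with the same horn-filling machinery), and then concludes by general homotopy theory. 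Second, your verification of $\Gamma^*\circ\Psi\simeq\id$ is not ``a computation inside $\Cstcat_+$'': the two \Star{}homomorphisms $f_{\Gamma_\varphi}$ and $i_{\Gamma_\varphi}\circ\varphi$ into $\Comp(\Gamma_\varphi\oplus B)$ are genuinely different \Star{}homomorphisms (one acts on the summand $\Gamma_\varphi$, the other on the summand $B$), so no $2$\nb-simplex of $N\Cstcat_+$ compares them. The comparison $2$\nb-cell coming from \eqref{eq:Gamma_multiplicative} and Lemma~\ref{lem:subdivision_back_to_corr} is an isomorphism of correspondences, i.e.\ it lives in $\Corr_\prop$, so to transport it into $\Cat[D]$ you would already need the coherent extension $\bar{F}$ whose existence is the whole content of the theorem.
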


This theorem is our main theorem.  It says in elementary language that
the proper correspondence quasi-category \(N\Corr_\prop\) is the
Dwyer--Kan localisation of the category of
\(\Cst\)\nb-algebras~\(\Cstcat_+\) at the class of corner embeddings.

Since~\(\Gamma\) is \(\Cst\)\nb-stable, \(F\circ \Gamma\) is
\(\Cst\)\nb-stable for any simplicial map \(F\colon N\Corr_\prop\to
\Cat[D]\).  Hence~\(\Gamma^*\) is a simplicial map to
\(\Map_\Comp(N\Cstcat_+,\Cat[D])\) as asserted.

\section{Proof of the main theorem}
\label{sec:proof_main}

The following proposition is a key first step in the proof:

\begin{proposition}
  \label{pro:extend_to_Corrprop}
  For any \(F\in \Map_\Comp(N\Cstcat_+,\Cat[D])\) there is
  \(\bar{F}\in \Map(N\Corr_\prop, \Cat[D])\) with
  \(\bar{F}\circ \Gamma=F\).
\end{proposition}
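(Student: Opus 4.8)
The plan is to construct $\bar F$ by resolving every proper correspondence as the composite of a \Star homomorphism with the inverse of a corner embedding, and then to transport this resolution through~$F$; the hypothesis that~$F$ is $\Cst$\nb-stable is exactly what allows the corner embeddings to be inverted in~$\Cat[D]$.

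First I would record the pointwise resolution. For a proper correspondence $\Hilm\colon A\to B$ the left action is nondegenerate and takes values in $\Comp(\Hilm)$, so it extends by zero to a \Star homomorphism $\kappa_{\Hilm}\colon A\to\Comp(\Hilm\oplus B)$; alongside it stands the corner embedding $i_{\Hilm}\colon B\to\Comp(\Hilm\oplus B)$ of Definition~\ref{def:Cstar-stable_functor}. A short computation in the linking algebra $\Comp(\Hilm\oplus B)$, using the nondegeneracy of the left action, produces a canonical isomorphism $\Gamma_{i_{\Hilm}}^{-1}\circ\Gamma_{\kappa_{\Hilm}}\congto\Hilm$ in $\Corr_\prop$, where $\Gamma_{i_{\Hilm}}^{-1}$ is the quasi-inverse $\Comp(B,\Hilm\oplus B)$ exhibited in the example after Definition~\ref{def:Cstar-stable_functor}. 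This dictates $\bar F$ on low simplices: on objects $\bar F\defeq F$, and on a $1$\nb-simplex $\Hilm$ one sets $\bar F(\Hilm)\defeq F(i_{\Hilm})^{-1}\circ F(\kappa_{\Hilm})$ for a chosen inverse of the equivalence $F(i_{\Hilm})$ and a chosen composite. For a correspondence of the form $\Gamma_\varphi$ this recipe returns $F(\varphi)$ up to canonical homotopy, which is what makes the identity $\bar F\circ\Gamma=F$ attainable.

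The substantial task is to promote this to a genuine simplicial map defined coherently on all of $N\Corr_\prop$. I would build $\bar F$ by induction over the skeleta, relative to the simplicial subset $\Gamma(N\Cstcat_+)$ on which it is prescribed to equal~$F$, so that $\bar F\circ\Gamma=F$ holds on the nose. At each stage one must fill a horn in the quasi-category $\Cat[D]$: the inner horns are always fillable, while the outer horns that arise each carry an edge coming from a corner embedding, which is an equivalence precisely because $F$ is $\Cst$\nb-stable, so these horns are special and fillable as well. Since both the spaces of inverses of an equivalence and the spaces of such fillers are contractible, the choices can be made compatibly and the outcome is independent of them up to homotopy, yielding the required~$\bar F$.

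The main obstacle is exactly this coherence. The resolution algebras attached to different edges and different faces of an $n$\nb-simplex are a priori unrelated, and the naive remedy of resolving everything into one ambient algebra---say $\Comp(\bigoplus_i\Hilm_{in})$ over the last vertex---is not compatible with the face that omits that vertex. The real work is therefore to convert the associativity data $u_{ijk}$ of Definition~\ref{def:NCorr} into a coherent system of comparison \Star homomorphisms between the algebras $\Comp(\Hilm_{ij}\oplus A_j)$, to match these against the Kasparov\nb-stabilisation identifications already used above, and to verify that the diagrams they generate in~$\Cat[D]$ glue consistently across all faces and degeneracies. Establishing this consistency is the heart of the argument.
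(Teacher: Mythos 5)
Your opening moves coincide with the paper's: the resolution \(A\xrightarrow{f_{\Hilm}}\Comp(\Hilm\oplus B)\xleftarrow{i_{\Hilm}} B\) together with the canonical isomorphism \(\Hilm\otimes_B\Gamma(i_{\Hilm})\cong\Gamma(f_{\Hilm})\) is exactly Lemma~\ref{lem:subdivision_back_to_corr} (your \(\kappa_{\Hilm}\) is the paper's \(f_{\Hilm}\)), your prescription for \(\bar F(\Hilm)\) is the triangle~\eqref{eq:triangle_to_fill}, and your horn-filling strategy --- inner horns always, outer horns because their critical edge is \(F\) of a corner embedding and hence an equivalence --- is the paper's appeal to Joyal's theorem on special horns.

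However, your argument stops exactly where the real proof begins, and you say so yourself: ``establishing this consistency is the heart of the argument'' describes a step you have not carried out, so this is a genuine gap rather than a presentational one. Contractibility of filler spaces shows that any two fillers of a single horn are homotopic; it does not produce one filler per nondegenerate simplex of \(N\Corr_\prop\) such that the chosen fillers restrict to one another along shared faces, and the proposition demands a strict simplicial map with \(\bar F\circ\Gamma=F\) on the nose, not up to homotopy. Note also that the ``naive remedy'' you dismiss is, suitably refined, precisely the paper's solution: instead of resolving only into the single algebra over the last vertex, one does it for \emph{every} nonempty subset \(S\subseteq[n]\), setting \(\Hilm_S\defeq\bigoplus_{i\in S}\Hilm_{i,\max S}\) and \(A_S\defeq\Comp(\Hilm_S)\). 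The data \(u_{ijk}\) then furnish \Star{}homomorphisms \(f_{ST}\colon A_S\to A_T\) for \(S\subseteq T\), and the identity \(f_{TU}\circ f_{ST}=f_{SU}\) is exactly the cocycle condition~\eqref{eq:n-simplex_correspondence}; this yields a functor \(\Subdiv{n}\to N\Cstcat_+\) (Lemma~\ref{lem:fST_functor_Subdiv}), and compatibility with the faces of the given simplex becomes automatic because an injective \(\varphi\colon[m]\to[n]\) embeds \(\Subdiv{m}\) as a subcomplex of \(\Subdiv{n}\). What remains is a concrete extension problem: the composite \(G\colon\Subdiv{n}\to N\Cstcat_+\xrightarrow{F}\Cat[D]\) is extended to a map \(\bar G\) on the enlarged complex \(\CSubdiv{n}\) (the nerve of \(\Subs{n}\) with the added relations \(\{i\}\le\{j\}\)) by a double recursion --- over the number \(k\) of singleton vertices and then over the simplex length \(\ell\) --- ordered so that every face of each horn to be filled was either constructed at an earlier stage or is inherited, via~\eqref{eq:barG_compatible_with_simplicial_maps}, from lower-dimensional simplices of \(N\Corr_\prop\); these horns are inner except in the last step \(\ell=k+1\), where \(\Cst\)\nb-stability of \(F\) makes them special. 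Setting \(\bar F(A_i,\Hilm_{ij},u_{ijk})\defeq\bar G(0,1,\dotsc,n)\) then gives a genuinely simplicial map with \(\bar F\circ\Gamma=F\). It is this explicit ordering and re-use of choices, not a contractibility argument, that secures the coherence you correctly identified as the crux but left unresolved.
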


To prove this, we need the \emph{barycentric
  subdivision}~\(\Subdiv{n}\) of~\(\Simp{n}\).  This simplicial set
has nonempty subsets \(S\subseteq [n]\) as vertices and chains of
inclusions
\(S_0\subseteq S_1\subseteq S_2\subseteq S_3\subseteq \cdots\subseteq
S_n\) as \(n\)\nb-simplices.  Thus~\(\Subdiv{n}\) is the nerve of the
partially ordered set~\(\Subs{n}\) of nonempty subsets of~\([n]\),
viewed as a category by putting a unique arrow \(S\to T\) for
\(S\subseteq T\) and no arrow otherwise.

Let \((A_i,\Hilm_{i j}, u_{i j k})\) be an \(n\)\nb-simplex in the
nerve \(N\Corr_\prop\) as in Definition~\ref{def:NCorr}.  We are going
to construct a functor \(\Subdiv{n}\to N\Cstcat_+\) from it.  Let
\(S=\{i_0,\dotsc,i_k\}\subseteq [n]\) with \(i_0< \dotsb< i_k\).
Then we define a Hilbert \(A_{i_k}\)\nb-module by
\begin{equation}
  \label{eq:E_S_decomposition}
  \Hilm_S \defeq \Hilm_{i_0,i_k} \oplus
  \Hilm_{i_1,i_k} \oplus \dotsb \oplus \Hilm_{i_k,i_k},
\end{equation}
and we let \(A_S \defeq \Comp(\Hilm_S)\).

First let \(S\subseteq T\) satisfy \(\max S=\max T\).  Then both
\(\Hilm_S\) and~\(\Hilm_T\) are Hilbert modules over the same
\(\Cst\)\nb-algebra.  There is an obvious adjointable inclusion map
\(\Hilm_S\injto \Hilm_T\), which induces a \Star{}homomorphism
\(f_{ST}\colon A_S\to A_T\) by letting an operator on~\(\Hilm_S\) act
as zero on the orthogonal complement of~\(\Hilm_S\) in~\(\Hilm_T\).
By the definition of a simplex in \(N\Corr_\prop\),
\(\Hilm_{i_k,i_k}\) is the identity correspondence on~\(A_{i_k}\); thus
the embedding \(A_{i_k}\to A_S\) is a corner embedding as in
\longref{Definition}{def:Cstar-stable_functor}.

Now consider \(S\subseteq T\) with \(\max S\neq \max T\).  Set
\(j\defeq \max T\).  The unitaries \(u_{i_\ell,i_k,j}\colon
\Hilm_{i_\ell,i_k}\otimes_{A_{i_k}} \Hilm_{i_k,j}\to
\Hilm_{i_\ell,j}\) for \(\ell=0,\dotsc, k\) provide an adjointable
embedding
\[
  \Hilm_S\otimes_{A_{i_k}} \Hilm_{i_k,j} \congto \bigoplus_{\ell=0}^k
  \Hilm_{i_\ell,j} \subset \bigoplus_{i\in T} \Hilm_{i,j} = \Hilm_T.
\]
We define \(f_{ST}\colon A_S\to A_T\) by first mapping
\(x\mapsto x\otimes \id_{\Hilm_{i_k,j}}\) and then extending by zero
on the orthogonal complement of the direct summand
\(\Hilm_S\otimes_{A_{i_k}} \Hilm_{i_k,j}\)
in~\(\Hilm_T\).

\begin{lemma}
  \label{lem:fST_functor_Subdiv}
  The maps \(f_{ST}\colon A_S\to A_T\) for \(S\subseteq T\) form a
  functor \(\Subdiv{n}\to N\Cstcat_+\).
\end{lemma}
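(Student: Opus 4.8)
The plan is to verify the two functor axioms, namely \(f_{SS}=\id_{A_S}\) for each~\(S\) and the composition law \(f_{TU}\circ f_{ST}=f_{SU}\) whenever \(S\subseteq T\subseteq U\). The identity axiom is immediate: when \(S=T\) the inclusion \(\Hilm_S\injto\Hilm_T\) is the identity and \(\Hilm_{\max S,\max S}\) is the identity correspondence on~\(A_{\max S}\), so both cases of the construction give \(f_{SS}=\id_{A_S}\). All the content is therefore in the composition law.

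First I would give a uniform description of \(f_{ST}\) covering both cases of the construction at once. Writing \(p\defeq\max S\) and \(j\defeq\max T\), the unitaries \(u_{i_\ell,p,j}\) assemble into an adjointable isometry
\[
  V_{ST}\colon \Hilm_S\otimes_{A_p}\Hilm_{p,j}\congto
  \bigoplus_{i\in S}\Hilm_{i,j}\subseteq\Hilm_T
\]
whose range is an orthogonal direct summand, and then \(f_{ST}(x)=V_{ST}(x\otimes_{A_p}\id_{\Hilm_{p,j}})V_{ST}^*\). When \(p=j\) the correspondence \(\Hilm_{p,j}\) is the identity \(A_p\), the tensor factor is trivial, and \(V_{ST}\) is the plain inclusion \(\Hilm_S\injto\Hilm_T\), so this formula recovers the case \(\max S=\max T\) as well. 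In this form \(f_{ST}\) is manifestly a \Star{}homomorphism \(A_S\to A_T\), given by amplification and conjugation followed by extension by zero.

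Next I would reduce the composition law to a single identity of isometries. Setting \(p\defeq\max S\), \(q\defeq\max T\), \(r\defeq\max U\), a short computation with the conjugation formula shows that \(f_{TU}\circ f_{ST}=f_{SU}\) follows once we prove
\[
  V_{TU}\circ(V_{ST}\otimes_{A_q}\id_{\Hilm_{q,r}})
  = V_{SU}\circ(\id_{\Hilm_S}\otimes_{A_p}u_{p,q,r})
\]
as maps \(\Hilm_S\otimes_{A_p}\Hilm_{p,q}\otimes_{A_q}\Hilm_{q,r}\to\Hilm_U\), because the unitary \(\id_{\Hilm_S}\otimes_{A_p}u_{p,q,r}\) intertwines \(x\otimes\id_{\Hilm_{p,q}}\otimes\id_{\Hilm_{q,r}}\) with \(x\otimes\id_{\Hilm_{p,r}}\). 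Both sides of this identity are block diagonal for the decomposition \(\Hilm_S=\bigoplus_\ell\Hilm_{i_\ell,p}\), and on the summand indexed by~\(i_\ell\) they become the two composites
\[
  u_{i_\ell,q,r}\circ(u_{i_\ell,p,q}\otimes\id_{\Hilm_{q,r}})
  \qquad\text{and}\qquad
  u_{i_\ell,p,r}\circ(\id_{\Hilm_{i_\ell,p}}\otimes u_{p,q,r}),
\]
which are the two sides of the commuting diagram \eqref{eq:n-simplex_correspondence} with \((i,j,k,l)=(i_\ell,p,q,r)\). They agree precisely because that diagram commutes; the degenerate cases \(p=q\) or \(q=r\) reduce to the normalisation conditions in part~(1) of \longref{Definition}{def:NCorr} together with the coherence of the canonical unit isomorphisms.

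I expect the main obstacle to be bookkeeping rather than any deep point. One must keep track of the associators suppressed above and, more importantly, check that the ranges of the isometries nest correctly, so that conjugation by \(V_{TU}\) after conjugation by \(V_{ST}\) is genuinely conjugation by \(V_{TU}(V_{ST}\otimes\id_{\Hilm_{q,r}})\) and the two successive extensions by zero compose to the single extension by zero defining \(f_{SU}\). Once the displayed identity of isometries is established this is routine, and the conceptual heart of the lemma is exactly the cocycle condition \eqref{eq:n-simplex_correspondence}.
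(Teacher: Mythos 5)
Your proof is correct and follows essentially the same route as the paper's: both reduce the composition law \(f_{TU}\circ f_{ST}=f_{SU}\) to the coherence diagram \eqref{eq:n-simplex_correspondence} applied with indices \((i_\ell,\max S,\max T,\max U)\) for each element \(i_\ell\in S\). The only difference is organisational --- the paper computes both sides directly on block operators \(x\in\Comp(\Hilm_{l,i},\Hilm_{m,i})\) as conjugations by the unitaries \(u\), whereas you package those unitaries into a single isometry \(V_{ST}\) and verify an intertwining identity blockwise --- but the substance is identical.
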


\begin{proof}
  We must show that \(f_{TU}\circ f_{ST} = f_{SU}\) for
  \(S\subseteq T\subseteq U\).  Let \(i\), \(j\) and~\(k\) be the
  maxima of \(S\), \(T\) and~\(U\), respectively.  The two maps
  \(f_{TU}\circ f_{ST}\) and~\(f_{SU}\) map the \(\Cst\)\nb-algebra of
  compact operators on the direct sum of~\(\Hilm_{l,i}\) for
  \(l\in S\) to the \(\Cst\)\nb-algebra of compact operators on the
  direct sum of~\(\Hilm_{l,k}\) for \(l\in U\).  It suffices to check
  \(f_{TU}\circ f_{ST}(x) = f_{SU}(x)\) when~\(x\) is an operator in
  \(\Comp(\Hilm_{l,i},\Hilm_{m,i})\) extended by zero on the other
  summands.  By definition, \(f_{SU}\) maps this to the operator
  \(u_{mik}(x\otimes 1_{\Hilm_{ik}})u_{lik}^*\) in
  \(\Comp(\Hilm_{l,k},\Hilm_{m,k})\), again extended by zero on the
  other summands, whereas \(f_{TU}\circ f_{ST}\) maps it to
  \[
    u_{mjk}((u_{mij}(x\otimes 1_{\Hilm_{ij}})u_{lij}^*)
    \otimes 1_{\Hilm_{jk}})u_{ljk}^* \in
    \Comp(\Hilm_{l,k},\Hilm_{m,k}),
  \]
  extended by zero.  These operators are equal
  by~\eqref{eq:n-simplex_correspondence}.
\end{proof}

\begin{lemma}
  \label{lem:subdivision_back_to_corr}
  Let~\(\Hilm\) be a proper \(A,B\)-correspondence.  Define a
  \Star{}homomorphism
  \(f_{\Hilm}\colon A\to \Comp(\Hilm)\subset \Comp(\Hilm\oplus B)\)
  from the left \(A\)\nb-action on~\(\Hilm\) and let
  \(i_{\Hilm}\colon B\to \Comp(\Hilm\oplus B)\) be the corner
  embedding.  Let \(\Gamma(f_{\Hilm})\) and \(\Gamma(i_{\Hilm})\)
  denote the associated proper correspondences \(A\to \Comp(\Hilm\oplus B)\)
  and \(B\to \Comp(\Hilm\oplus B)\).  There is a canonical isomorphism
  of correspondences
  \(u_{\Hilm}\colon \Hilm\otimes_B\Gamma(i_{\Hilm}) \cong
  \Gamma(f_{\Hilm})\).
\end{lemma}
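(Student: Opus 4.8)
The plan is to make both correspondences completely explicit as spaces of compact operators and then to exhibit $u_{\Hilm}$ by an evident formula. Writing $\Comp(\Hilm\oplus B)$ as a matrix of corners,
\[
  \Comp(\Hilm\oplus B)\cong
  \begin{pmatrix}
    \Comp(\Hilm) & \Comp(B,\Hilm)\\
    \Comp(\Hilm,B) & \Comp(B)
  \end{pmatrix},
\]
the earlier example already identifies $\Gamma(i_{\Hilm})$ with the bottom row $\Comp(\Hilm\oplus B,B)$, viewed as a $B,\Comp(\Hilm\oplus B)$-correspondence in which $B\cong\Comp(B)$ acts on the left by post-composition and $\Comp(\Hilm\oplus B)$ on the right by pre-composition. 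In the same spirit I would compute $\Gamma(f_{\Hilm})=\overline{f_{\Hilm}(A)\,\Comp(\Hilm\oplus B)}$: since $f_{\Hilm}(A)$ lives in the top-left corner $\Comp(\Hilm)$, the closed right ideal it generates is the top row $\Comp(\Hilm\oplus B,\Hilm)$, an $A,\Comp(\Hilm\oplus B)$-correspondence with $a\cdot S=f_{\Hilm}(a)\circ S$. Here one uses $\overline{A\Hilm}=\Hilm$ to see that $\overline{f_{\Hilm}(A)\Comp(\Hilm)}=\Comp(\Hilm)$ and $\overline{f_{\Hilm}(A)\Comp(B,\Hilm)}=\Comp(B,\Hilm)$.

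Next I would write down the map. Identifying $\xi\in\Hilm$ with the operator $\ket{\xi}\colon B\to\Hilm$, $b\mapsto\xi b$, whose adjoint is $\bra{\xi}\colon\Hilm\to B$, $\eta\mapsto\langle\xi,\eta\rangle$, I set
\[
  u_{\Hilm}(\xi\otimes T)\defeq \ket{\xi}\circ T\in\Comp(\Hilm\oplus B,\Hilm),
  \qquad \xi\in\Hilm,\ T\in\Comp(\Hilm\oplus B,B).
\]
This descends to the balanced tensor product because $\ket{\xi}\circ b=\ket{\xi b}$ for $b\in B$, so that $u_{\Hilm}(\xi\cdot b\otimes T)=u_{\Hilm}(\xi\otimes b\cdot T)$.

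It then remains to check that $u_{\Hilm}$ is an isomorphism of correspondences. Right $\Comp(\Hilm\oplus B)$-linearity is immediate, both actions being pre-composition, and left $A$-linearity follows from $\ket{f_{\Hilm}(a)\xi}=f_{\Hilm}(a)\circ\ket{\xi}$. For the inner products the key identity is $\bra{\xi_1}\circ\ket{\xi_2}=\langle\xi_1,\xi_2\rangle$ (as left multiplication on $B$), which yields
\[
  \langle u_{\Hilm}(\xi_1\otimes T_1),\,u_{\Hilm}(\xi_2\otimes T_2)\rangle
  =T_1^*\,\langle\xi_1,\xi_2\rangle\,T_2
  =\langle T_1,\,\langle\xi_1,\xi_2\rangle\cdot T_2\rangle,
\]
the right-hand side being exactly the $\Comp(\Hilm\oplus B)$-valued inner product on $\Hilm\otimes_B\Gamma(i_{\Hilm})$; hence $u_{\Hilm}$ is isometric. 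Surjectivity follows since $\ket{\xi}\circ\ketbra{b}{\zeta}=\ketbra{\xi b}{\zeta}$ lies in the range for all $b\in B$ and $\zeta\in\Hilm\oplus B$, and these rank-one operators span a dense subspace of $\Comp(\Hilm\oplus B,\Hilm)$ because $\overline{\Hilm B}=\Hilm$; an isometry with dense range is onto.

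I expect the only genuinely delicate point to be the identification of $\Gamma(f_{\Hilm})$ with the full top row $\Comp(\Hilm\oplus B,\Hilm)$: it is precisely here that nondegeneracy of the left $A$-action on $\Hilm$ is needed, to guarantee that the closed right ideal generated by the corner $f_{\Hilm}(A)\subseteq\Comp(\Hilm)$ recovers all of $\Comp(\Hilm\oplus B,\Hilm)$ and not merely a proper subcorrespondence. Once the two bimodules are pinned down in this way, the remaining verifications above are routine.
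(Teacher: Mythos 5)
Your proposal is correct and follows essentially the same route as the paper's own proof: both identify \(\Gamma(i_{\Hilm})\cong\Comp(\Hilm\oplus B,B)\) and \(\Gamma(f_{\Hilm})\cong\Comp(\Hilm\oplus B,\Hilm)\) and then exhibit the isomorphism \(\xi\otimes T\mapsto\ket{\xi}\circ T\). The only difference is that you spell out the ``quick calculation'' (balancedness, bimodule linearity, isometry, and density of the range via nondegeneracy of the \(A\)- and \(B\)-actions) that the paper leaves to the reader, and these verifications are all correct.
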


\begin{proof}
  We may identify the underlying Hilbert
  \(\Comp(\Hilm\oplus B)\)-module of \(\Gamma(i_{\Hilm})\) with
  \[
    B\cdot\Comp(\Hilm\oplus B) \cong \Comp(\Hilm\oplus B,B).
  \]
  This becomes a correspondence isomorphism when \(B\) and
  \(\Comp(\Hilm\oplus B)\) act on \(\Comp(\Hilm\oplus B,B)\) by left
  and right multiplication, respectively.  Similarly,
  \(\Gamma(f_{\Hilm}) \cong \Comp(\Hilm\oplus B,\Hilm)\) with the
  obvious Hilbert module structure over \(\Comp(\Hilm\oplus B)\)
  and~\(A\) acting on the left by multiplication with the operator in
  \(\Comp(\Hilm)\) from the left multiplication in the proper
  \(A,B\)-correspondence~\(\Hilm\).
  
  There is an isometry
  \(\Hilm\otimes_B\Comp(\Hilm\oplus B,B) \cong \Comp(\Hilm\oplus
  B,\Hilm)\), mapping \(\xi\otimes \eta\) to the rank-one operator
  \(\ket{\xi}\circ \eta\colon \Hilm\oplus B \to B \to \Hilm\); here
  \(\ket{\xi}(b) \defeq \xi\cdot b\).  A quick calculation shows that
  this is a correspondence isomorphism
  \(\Hilm\otimes_B\Gamma(i_{\Hilm}) \cong \Gamma(f_{\Hilm})\).
\end{proof}

The diagram
\[
  A\xrightarrow{f_{\Hilm}} \Comp(\Hilm\oplus B)
  \xleftarrow{i_{\Hilm}} B
\]
in \longref{Lemma}{lem:subdivision_back_to_corr} is exactly the
functor \(\Subdiv{1}\to N\Cstcat_+\) constructed from the
arrow~\(\Hilm\) in~\(\Corr_\prop\).

Let \(\bar{F}\colon N\Corr_\prop\to \Cat[D]\) be a simplicial map to a
quasi-category that extends a \(\Cst\)\nb-stable simplicial map
\(F\colon N\Cstcat_+\to \Cat[D]\).  Then it maps the triangle for the
isomorphism~\(u_{\Hilm}\) in
\longref{Lemma}{lem:subdivision_back_to_corr} to a triangle
\begin{equation}
  \label{eq:triangle_to_fill}
  \twotriangle[2.5]{F(A)}{F(B)}{F(\Comp(\Hilm\oplus B))}
  {\bar{F}(\Hilm)}{F(i_{\Hilm})}{F(f_{\Hilm})}{\bar{F}(u_{\Hilm})}
\end{equation}
in~\(\Cat[D]\).  This gives a recipe to construct \(\bar{F}(\Hilm)\).
If~\(F\) is \(\Cst\)\nb-stable, then \(F(i_{\Hilm})\) is an
equivalence in~\(\Cat[D]\).  We may use a quasi-inverse of it to
complete the pair of maps \(F(f_{\Hilm})\), \(F(i_{\Hilm})\) to a
triangle as in~\eqref{eq:triangle_to_fill}.  Hence we
get~\(\bar{F}(\Hilm)\) for all arrows in~\(\Corr_\prop\), together
with triangles~\(\bar{F}(u_{\Hilm})\).

Next we use the functors \(\Subdiv{n}\to N\Cstcat_+\) to
construct~\(\bar{F}\) on \(n\)\nb-simplices in \(N\Corr_\prop\).  This
is a more complicated version of the triangle filling construction above
with several intermediate steps, which require some bookkeeping to
make sure that each step is possible and that they all taken together
well-define a simplicial map.

Let~\(A_i\) for \(0\le i\le n\), \(\Hilm_{ij}\colon A_i\to A_j\) for
\(0\le i\le j\le n\) and
\(u_{ijk}\colon \Hilm_{ij}\otimes_{A_j} \Hilm_{jk}\to \Hilm_{ik}\) for
\(0\le i\le j\le k\le n\) give an \(n\)\nb-simplex
in~\(N\Corr_\prop\).  Lemma~\ref{lem:fST_functor_Subdiv} shows that
the construction above the lemma yields a simplicial map
\[
  G = G(A_i,\Hilm_{ij},u_{ijk})\colon \Subdiv{n} \to N\Cstcat_+
  \xrightarrow{F} \Cat[D].
\]
In order to define~\(\bar{F}\) on our given \(n\)\nb-simplex, we
extend~\(G\) to a larger simplicial set~\(\CSubdiv{n}\), namely, the
nerve of the partially ordered set that we get from~\(\Subs{n}\) by
adjoining the relations \(\{i\} \le \{j\}\) for all
\(0\le i\le j\le n\).  Thus~\(\CSubdiv{n}\) has \(\ell\)\nb-simplices
\[
  (i_0,i_1,\dotsc ,i_k,S_{k+1},\dotsc ,S_\ell)
\]
for all \(0\le i_0\le i_1\le \cdots\le i_k\le n\) and
\(S_{k+1}\subseteq \cdots\subseteq S_\ell\subseteq [n]\) with
\(i_0,\dotsc ,i_k\in S_{k+1}\) and \(\abs{S_{k+1}}\ge 2\),
\(-1\le k\le \ell\); the cases \(k=-1\) or \(k=\ell\) are allowed.
Those simplices with \(k\le 0\) already belong to~\(\Subdiv{n}\).  As
usual, the \(j\)th face map leaves out the \(j\)th entry in the list,
and the \(j\)th degeneracy map doubles the \(j\)th entry.  Thus a
simplex is nondegenerate if \(0\le i_0<i_1<\cdots<i_k\le n\) and
\[
  \{i_0,\dotsc ,i_k\} \subseteq S_{k+1} \subsetneq S_{k+2} \subsetneq
  \cdots \subsetneq S_\ell \subseteq [n].
\]
We declare \(S_k\defeq \{i_0,\dotsc ,i_k\}\), so that~\(S_\ell\) is
defined also for \(k=\ell\).

We denote the desired extension of~\(G\) by
\[
  \bar{G} = \bar{G}(A_i,\Hilm_{ij},u_{ijk})\colon \CSubdiv{n}\to
\Cat[D].
\]
We will only use the value of~\(\bar{G}\) on the \(n\)\nb-simplex
\((0,\dotsc ,n)\) in \(\CSubdiv{n}\) to define~\(\bar{F}\); the values
on the other simplices of~\(\CSubdiv{n}\) are needed to construct this
value and to relate it to the given functor~\(F\).

We want the maps~\(\bar{G}\) for different simplices
in~\(N\Corr_\prop\) to be compatible with the face and degeneracy maps
of~\(N\Corr_\prop\) in the following way.  Let \(\varphi\colon [m]\to
[n]\) be an order-preserving map.  This maps the \(n\)\nb-simplex
\((A_i,\Hilm_{ij},u_{ijk})\) to an \(m\)\nb-simplex
\[
  (A_{\varphi(i)},\Hilm_{\varphi(i),\varphi(j)},u_{\varphi(i),\varphi(j),\varphi(k)}),
\]
and it induces a simplicial map
\[
  \varphi_*\colon \CSubdiv{m}\to \CSubdiv{n}.
\]
We want
\begin{equation}
  \label{eq:barG_compatible_with_simplicial_maps}
  \bar{G}(A_{\varphi(i)},\Hilm_{\varphi(i)\varphi(j)},u_{\varphi(i)\varphi(j)\varphi(k)})
  = \bar{G}(A_i,\Hilm_{ij},u_{ijk})\circ \varphi_*.
\end{equation}
If the given simplex~\((A_i,\Hilm_{ij},u_{ijk})\) is degenerate,
then~\eqref{eq:barG_compatible_with_simplicial_maps} for surjective
maps~\(\varphi\)
determines~\(\bar{G}\) by its values on simplices in~\(N\Corr_\prop\)
of smaller dimension.  Since we are going to construct~\(\bar{G}\)
inductively over the dimension of the simplices, there is nothing to
do for degenerate simplices and we may assume from now on that
\((A_i,\Hilm_{ij},u_{ijk})\) is nondegenerate.  In addition, it then
suffices to check~\eqref{eq:barG_compatible_with_simplicial_maps} for
injective maps.

The condition~\eqref{eq:barG_compatible_with_simplicial_maps}
determines
\begin{equation}
  \label{eq:barG_i0_ik_Sk_Sl}
  \bar{G}(i_0,\dotsc ,i_k,S_{k+1},\dotsc,S_\ell)
\end{equation}
if \(S_\ell\neq [n]\) because then there is \(\varphi\colon [n-1]\to
[n]\) with \((i_0,\dotsc ,S_\ell)\in \varphi_*(\CSubdiv{n-1})\).
Since~\(\bar{G}\) must extend~\(G\), we already
know~\eqref{eq:barG_i0_ik_Sk_Sl} if \(k\le 0\).
These simplices form a subcomplex of~\(\CSubdiv{n}\).
The prescription for~\(\bar{G}\) on this subcomplex is a simplicial
map.
We must extend it to~\(\CSubdiv{n}\).

It remains to define~\eqref{eq:barG_i0_ik_Sk_Sl} for simplices
in~\(\CSubdiv{n}\) with \(k\ge 1\) and \(S_\ell=[n]\).  We do this by
a recursion through \(k=1,\dotsc ,n\).  When
constructing~\eqref{eq:barG_i0_ik_Sk_Sl} for \(k\ge 1\), we assume
that~\(\bar{G}\) is already given on simplices in~\(\CSubdiv{n}\) with
smaller value of~\(k\) (which is indeed so for \(k=1\)).  For
fixed~\(k\), we run another recursion on~\(\ell\).  In each step, we
fill a horn to construct~\eqref{eq:barG_i0_ik_Sk_Sl} if
\(S_{k+1}=\{i_0,\dotsc ,i_k\}\) and
\(S_{k+1}\subsetneq S_{k+2}\subsetneq \cdots\subsetneq S_\ell=[n]\),
assuming that~\eqref{eq:barG_i0_ik_Sk_Sl} has been constructed for
smaller values of~\(\ell\).

Assume first that \(S_{k+1}=\{i_0,\dotsc ,i_k\}\).  The \(j\)th face
of
\begin{equation}
  \label{eq:i0_ik_Sk_Sl}
  (i_0,\dotsc ,i_k,S_{k+1},\dotsc ,S_\ell)
\end{equation}
for \(0\le j\le k\) has a smaller value of~\(k\), so that~\(\bar{G}\)
on this face has been constructed.  The \(j\)th face
of~\eqref{eq:i0_ik_Sk_Sl} for \(k+2\le j\le \ell\) has a smaller value
of~\(\ell\) and still contains \(S_{k+1}=\{i_0,\dotsc ,i_k\}\), so
that~\(\bar{G}\) on this face has been constructed.  The values
of~\(\bar{G}\) on these boundary faces give an \((\ell,k+1)\)-horn
in~\(\Cat[D]\).  If \(\ell>k+1\), this is an inner horn and so it may
be filled in the quasi-category~\(\Cat[D]\).  Let \(\ell=k+1\).  The
arrow \(i_k\to S_{k+1} = \{i_0,\dotsc ,i_k\}\) induces a corner
embedding, and~\(G\) maps this to an equivalence in~\(\Cat[D]\) by
assumption; hence we have got a special \((\ell,\ell)\)-horn, and
these may be filled in all quasi-categories (see
\cite{Joyal:Quasi-categories}*{Theorem~1.3}).  (By the way, the case
\(\ell=k+1\) only occurs if \(k=n\) because \(S_\ell=[n]\).)

The above recursion for fixed~\(k\) gives~\eqref{eq:barG_i0_ik_Sk_Sl}
for all nondegenerate simplices in~\(\CSubdiv{n}\) with
\(S_{k+1}=\{i_0,\dotsc ,i_k\}\) and \(S_\ell=[n]\).  It also
gives~\(\bar{G}\) on the missing \(k+1\)st face
\[
  (i_0,\dotsc ,i_k,S_{k+2},\dotsc ,S_\ell)
\]
of~\eqref{eq:i0_ik_Sk_Sl}.  Any nondegenerate \(\ell-1\)-simplex
\[
  (i_0,\dotsc ,i_k,T_{k+1},\dotsc ,T_{\ell-1})
\]
in~\(\CSubdiv{n}\) with \(T_{\ell-1}=[n]\) and \(T_{k+1}\neq
\{i_0,\dotsc ,i_k\}\) is of this form for a unique \((i_0,\dotsc
,S_\ell)\), namely,
\[
  (i_0,\dotsc ,i_k,\{i_0,\dotsc ,i_k\},T_{k+1},\dotsc ,T_{\ell-1}).
\]
Hence our recursion defines~\(\bar{G}(i_0,\dotsc ,S_\ell)\) for all
nondegenerate simplices in~\(\CSubdiv{n}\) with \(S_\ell=[n]\).  We
extend the map to degenerate simplices in the unique way dictated by
the degeneracy maps and take this together with the values already
known for \(S_\ell\subsetneq [n]\).  This defines~\(\bar{G}\) on all
simplices of~\(\CSubdiv{n}\).  Since~\(\bar{G}\)
satisfies~\eqref{eq:barG_compatible_with_simplicial_maps},
\[
  \bar{F}(A_i,\Hilm_{ij},u_{ijk}) \defeq
  \bar{G}(A_i,\Hilm_{ij},u_{ijk})(0,1,\dotsc ,n)
\]
defines a simplicial map \(\bar{F}\colon N\Corr_\prop\to \Cat[D]\)
with \(\bar{F}\circ \Gamma=F\).  This finishes the proof of
Proposition~\ref{pro:extend_to_Corrprop}.

\medskip

Let us examine the construction for \(n=1\) and \(n=2\) to understand
better what is happening.

For \(n=1\), there is only one recursion step \(k=1\), \(\ell=2\),
which requires one \((2,2)\)-horn to be filled.  This is exactly the
construction of the triangle in~\eqref{eq:triangle_to_fill}.  The
value~\(\bar{F}(\Hilm)\) on the edge~\(\Hilm\) in~\(N\Corr_\prop\) is
indicated in~\eqref{eq:triangle_to_fill}.

For \(n=2\), Lemma~\ref{lem:fST_functor_Subdiv} defines~\(\bar{G}\) on
the six triangles that make up the barycentric subdivision
\(\Subdiv{n}\) of a triangle, see the top diagram in
Figure~\ref{fig:construction_figures}.
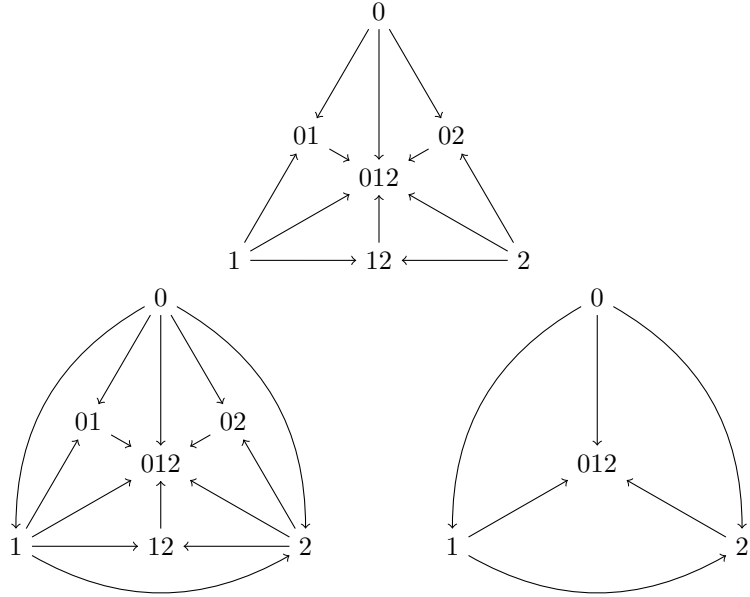
\begin{figure}[htbp]
  \centering
  \begin{tikzpicture}[scale=2.2, baseline=(current bounding box.west)]
    \node (N012) at (0,0) {\(012\)};
    \node (N0) at (90:1) {\(0\)};
    \node (N1) at (210:1) {\(1\)};
    \node (N2) at (330:1) {\(2\)};
    \node (N01) at ($(N0)!0.5!(N1)$) {\(01\)};
    \node (N02) at ($(N0)!0.5!(N2)$) {\(02\)};
    \node (N12) at ($(N2)!0.5!(N1)$) {\(12\)};
    \draw[cdar] (N0)--(N01);
    \draw[cdar] (N0)--(N02);
    \draw[cdar] (N0)--(N012);
    \draw[cdar] (N1)--(N01);
    \draw[cdar] (N1)--(N12);
    \draw[cdar] (N1)--(N012);
    \draw[cdar] (N2)--(N02);
    \draw[cdar] (N2)--(N12);
    \draw[cdar] (N2)--(N012);
    \draw[cdar] (N01)--(N012);
    \draw[cdar] (N02)--(N012);
    \draw[cdar] (N12)--(N012);
  \end{tikzpicture}\\
  \begin{tikzpicture}[scale=2.2, baseline=(current bounding box.west)]
    \node (N012) at (0,0) {\(012\)};
    \node (N0) at (90:1) {\(0\)};
    \node (N1) at (210:1) {\(1\)};
    \node (N2) at (330:1) {\(2\)};
    \node (N01) at ($(N0)!0.5!(N1)$) {\(01\)};
    \node (N02) at ($(N0)!0.5!(N2)$) {\(02\)};
    \node (N12) at ($(N2)!0.5!(N1)$) {\(12\)};
    \draw[cdar] (N0)--(N01);
    \draw[cdar] (N0)--(N02);
    \draw[cdar] (N0)--(N012);
    \draw[cdar] (N1)--(N01);
    \draw[cdar] (N1)--(N12);
    \draw[cdar] (N1)--(N012);
    \draw[cdar] (N2)--(N02);
    \draw[cdar] (N2)--(N12);
    \draw[cdar] (N2)--(N012);
    \draw[cdar] (N01)--(N012);
    \draw[cdar] (N02)--(N012);
    \draw[cdar] (N12)--(N012);
    \draw[cdar] (N0) to [bend right] (N1);
    \draw[cdar] (N0) to [bend left] (N2);
    \draw[cdar] (N1) to [bend right] (N2);
  \end{tikzpicture}
  \qquad\qquad
  \begin{tikzpicture}[scale=2.2, baseline=(current bounding box.west)]
    \node (N012) at (0,0) {\(012\)};
    \node (N0) at (90:1) {\(0\)};
    \node (N1) at (210:1) {\(1\)};
    \node (N2) at (330:1) {\(2\)};
    \draw[cdar] (N0) to [bend right] (N1);
    \draw[cdar] (N0) to [bend left] (N2);
    \draw[cdar] (N1) to [bend right] (N2);
    \draw[cdar] (N0)--(N012);
    \draw[cdar] (N1)--(N012);
    \draw[cdar] (N2)--(N012);
  \end{tikzpicture}
  \caption{The simplicial sets that occur in the construction for $n=2$ }
  \label{fig:construction_figures}
\end{figure}
The construction for \(1\)\nb-simplices (\(n=1\)) has already
defined~\(\bar{G}\) on the three triangles
\(i_0 \to i_1 \to \{i_0,i_1\}\) for
\((i_0,i_1) \in \{(0,1), (0,2), (1,2)\}\) that are added in the bottom
left diagram in Figure~\ref{fig:construction_figures}.  The three
60$^\circ$ corners in this figure give the \((3,2)\)-horns in the
tetrahedra \(i_0 < i_1 < \{i_0,i_2\} < \{0,1,2\}\) for
\((i_0,i_1) \in \{(0,1), (0,2), (1,2)\}\).  Our construction for
\(n=2\), \(k=1\) fills these horns.  This also gives values
for~\(\bar{G}\) on the \(2\)\nb-boundary faces
\(i_0 < i_1 < \{0,1,2\}\) of these tetrahedra.  These triangles are
shown in the bottom right figure of
Figure~\ref{fig:construction_figures}.  They are mapped by~\(\bar{G}\)
to a \((3,3)\)-horn in~\(\Cat[D]\).  This horn is special because the
\Star{}homomorphism \(A_{\{2\}} \to A_{\{0,1,2\}}\), which is
associated to the map \(2\to 012\), is a corner embedding.  Therefore,
this horn in~\(\Cat[D]\) may be filled.  This is done in the recursion
step for \(k=2\), which gives the value of~\(\bar{G}\) on the triangle
\(0\to 1 \to 2\).  This is the value of~\(\bar{F}\) on the given
triangle in~\(N\Corr_\prop\).

\medskip

Let \(m\in\N\).  Since \(\Map(\partial\Simp{m},\Cat[D])\) is a
quasi-category as well and the internal hom is adjoint to the product
of simplicial sets, Proposition~\ref{pro:extend_to_Corrprop} implies
that any simplicial map
\(f\colon N\Cstcat_+\times \partial\Simp{m}\to \Cat[D]\) factors as
\(F\circ (\Gamma\times \id_{\partial\Simp{m}})\) for a simplicial map
\(F\colon N\Corr_\prop\times \partial\Simp{m}\to \Cat[D]\).  That is,
the map~\(\Gamma\) induces surjections on all homotopy groups.  To
prove that~\(\Gamma\) is a homotopy equivalence, it remains to prove
that it also induces injective maps on all homotopy groups.  To prove
this, we show that, given simplicial maps
\[
  h\colon N\Cstcat_+\times \Simp{m}\to \Cat[D],\qquad \partial
  H\colon N\Corr_\prop\times \partial\Simp{m}\to \Cat[D]
\]
with
\(h\circ (\id\times \iota) = \partial H\circ (\Gamma\times \id)\),
where \(\iota\colon \partial \Simp{m}\to \Simp{m}\) is the inclusion
map, there is a simplicial map
\(H\colon N\Corr_\prop\times \Simp{m}\to \Cat[D]\) with
\(H\circ (\id\times \iota) = \partial H\) and
\(H\circ (\Gamma\times \id) = h\).

The construction of~\(H\) is a variant of the one above.  We have to
extend a given map from
\(N\Corr_\prop\times \partial \Simp{m}\cup \Gamma(N\Cstcat_+)\times
\Simp{m}\) to~\(\Cat[D]\) to a map defined on \(N\Corr_\prop\times
\Simp{m}\); we do
this by defining the values on \(n\)\nb-simplices by induction
over~\(n\).  We only construct something new for a nondegenerate
\(n\)\nb-simplex~\(s\) in \(N\Corr_\prop\times \Simp{m}\) that is not
contained in
\(N\Corr_\prop\times \partial \Simp{m}\cup \Gamma(N\Cstcat_+)\times
\Simp{m}\).  Write \(s=(s_1,s_2)\) with \(n\)\nb-simplices \(s_1\)
and~\(s_2\) in~\(N\Corr_\prop\) and \(\Simp{m}\).  The simplex~\(s_1\)
gives a simplicial map \(G'\colon \Subdiv{n}\to N\Cstcat_+\) as above,
and \((G',s_2)\colon \Subdiv{n}\to N\Cstcat_+\times \Simp{m}\) may be
composed with~\(h\) to get a simplicial map
\(G\colon \Subdiv{n}\to \Cat[D]\).  As above, we extend this to a map
\(\bar{G}\colon \CSubdiv{n}\to \Cat[D]\), such that~\(\bar{G}\) as a
function of~\(s\) is compatible with the face maps.  This construction
is exactly the same extension problem as above.  Having
constructed~\(\bar{G}\), we let \(H(s) \defeq \bar{G}(0,\dotsc,n)\).
This defines a simplicial map
\(H\colon N\Corr_\prop\times \Simp{m}\to \Cat[D]\) with the required
properties.

What we have just proved says that~\(\Gamma_*\) induces an isomorphism
on all homotopy groups.  Thus~\(\Gamma_*\) is a
homotopy equivalence by general results in homotopy theory.



This finishes the proof of \longref{Theorem}{the:Corr_prop_universal}.

\section{Some variants of the main theorem}
\label{sec:variants}

We have interpreted the proper correspondence bicategory as a
Dwyer--Kan localisation of the category of \(\Cst\)\nb-algebras and
\Star{}homomorphisms.  There is a similar interpretation for the
bicategory of all correspondences, proper or not.

An \(A,B\)\nb-correspondence with underlying Hilbert module~\(B\) is
the same as a nondegenerate \Star{}homomorphism from~\(A\) to the
multiplier algebra of~\(B\), briefly called a \emph{morphism}
from~\(A\) to~\(B\).  To formulate \(\Cst\)\nb-stability, we need the
corner embeddings \(i_{\Hilm}\colon B\to \Comp(\Hilm\oplus B)\), which
are degenerate.  Hence \(\Cst\)\nb-stability makes no sense for
functors on the morphism category~\(\Cstcat\): we must allow certain
degenerate morphisms.

A convenient choice is to take the \emph{strictly continuous}
\Star{}homomorphisms \(A\to \Mult(B)\).  These are those
\Star{}homomorphisms \(\varphi\colon A\to \Mult(B)\) that extend to a
strictly continuous \Star{}homomorphism \(\bar{\varphi}\colon
\Mult(A)\to \Mult(B)\); here we use that~\(\Mult(A)\) is the
completion of~\(A\) in the strict topology
(see~\cite{Busby:Double_centralizer}).  Therefore, the strictly
continuous \Star{}homomorphisms \(A\to \Mult(B)\) still form a
category, which we denote by~\(\Cstcat_\strict\).

The extension \(\bar{\varphi}\colon \Mult(A) \to \Mult(B)\) maps
\(1\in \Mult(A)\) to a projection \(p\in \Mult(B)\), and
\(\bar{\varphi}\colon \Mult(A)\to p\Mult(B)p=\Mult(p B p)\) is a
unital, strictly continuous \Star{}homomorphism.  Thus
\(\varphi(A)\subseteq \Mult(pBp)\) and~\(\varphi\) is nondegenerate as
a \Star{}homomorphism to \(\Mult(p B p)\).  A \(\Cst\)\nb-subalgebra
of~\(B\) of the form~\(p B p\) is called a \emph{corner} in~\(B\).
The strictly continuous \Star{}homomorphisms \(A\to \Mult(B)\) are
therefore the same as the morphisms from~\(A\) to corners in~\(B\).

An arrow~\(\varphi\) in~\(\Cstcat_\strict\) gives a correspondence,
namely, \(p B\subseteq B\) with the nondegenerate left \(A\)\nb-action
given by~\(\varphi\).  As for~\(\Cstcat_+\), this identifies the
nerve~\(N\Cstcat_\strict\) with a subcategory in~\(N\Corr\).  The
homomorphisms \(i_{\Hilm}\) and~\(f_{\Hilm}\) for a
correspondence~\(\Hilm\) in
\longref{Lemma}{lem:subdivision_back_to_corr} and the maps
\(f_{S T}\colon A_S \to A_T\) for an \(n\)\nb-simplex in \(N\Corr\)
are strictly continuous.  With this extra fact, the proofs above carry
over and show:

\begin{theorem}
  \label{the:Corr_universal}
  Let~\(\Cat[D]\) be a quasi-category.  The following simplicial map
  is a homotopy equivalence:
  \[
    \Gamma^*\colon \Map(N\Corr, \Cat[D]) \to
    \Map_\Comp(N\Cstcat_\strict,\Cat[D]),\quad
    F\mapsto F\circ \Gamma.
  \]
\end{theorem}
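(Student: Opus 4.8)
The plan is to rerun the proof of \longref{Theorem}{the:Corr_prop_universal} essentially verbatim, replacing \(\Cstcat_+\) by \(\Cstcat_\strict\) and \(\Corr_\prop\) by \(\Corr\) throughout. That proof was assembled from three families of \Star{}homomorphisms — the corner embeddings \(i_{\Hilm}\), the left-action maps \(f_{\Hilm}\), and the maps \(f_{ST}\colon A_S\to A_T\) attached to an \(n\)\nb-simplex — together with the purely combinatorial horn-filling bookkeeping on \(\CSubdiv{n}\). The combinatorics does not see which source category we start from, so the only thing to establish is that these three families are arrows of \(\Cstcat_\strict\), that is, strictly continuous, and that with this the analogues of \longref{Lemma}{lem:fST_functor_Subdiv} and \longref{Lemma}{lem:subdivision_back_to_corr} remain valid. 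The verification that \(\Gamma\) is \(\Cst\)\nb-stable also transfers unchanged, so that \(\Gamma^*\) does land in \(\Map_\Comp(N\Cstcat_\strict,\Cat[D])\).

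The single new point is strict continuity, which I would check as follows. For a correspondence \(\Hilm\) that need not be proper, the left \(A\)\nb-action takes values in the adjointable operators \(\Mult(\Comp(\Hilm))\) rather than in the compacts \(\Comp(\Hilm)\); hence \(f_{\Hilm}\) is a \Star{}homomorphism into the multiplier algebra \(\Mult(\Comp(\Hilm\oplus B))\), which is precisely why we must pass from \(\Cstcat_+\) to \(\Cstcat_\strict\). As the left action of a correspondence is nondegenerate, it extends uniquely and strictly continuously to \(\Mult(A)\), so \(f_{\Hilm}\) is an arrow of \(\Cstcat_\strict\). The corner embeddings \(i_{\Hilm}\), and more generally the extensions by zero onto an orthogonal complement, are visibly strictly continuous. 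The maps \(f_{ST}\) with \(\max S\neq \max T\) are the amplification \(x\mapsto x\otimes \id_{\Hilm_{i_k,j}}\) followed by such an extension by zero; for a non-proper \(\Hilm_{i_k,j}\) the amplification lands in \(\Mult(\Comp(\Hilm_S\otimes_{A_{i_k}}\Hilm_{i_k,j}))\) rather than in the compacts, but it is again nondegenerate and hence extends strictly continuously to the multipliers. With these observations the computation in \longref{Lemma}{lem:fST_functor_Subdiv} proving \(f_{TU}\circ f_{ST}=f_{SU}\) from~\eqref{eq:n-simplex_correspondence} is unchanged, so we again obtain a functor \(\Subdiv{n}\to N\Cstcat_\strict\), and \longref{Lemma}{lem:subdivision_back_to_corr} carries over word for word, now reading \(\Gamma\) as the homomorphism \(\Cstcat_\strict\to \Corr\).

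Granting these inputs, the remainder transfers formally. The extension step, the analogue of \longref{Proposition}{pro:extend_to_Corrprop}, starts from a \(\Cst\)\nb-stable \(F\in \Map_\Comp(N\Cstcat_\strict,\Cat[D])\) and builds \(\bar{G}\) on \(\CSubdiv{n}\) by the same recursion through~\(k\) and then~\(\ell\): the inner \((\ell,k+1)\)-horns are filled because \(\Cat[D]\) is a quasi-category, and the special \((\ell,\ell)\)-horns because the edge \(A_{i_k}\to A_{\{i_0,\dotsc,i_k\}}\) is still a corner embedding — the diagonal \(\Hilm_{ii}\) is the identity correspondence in \(N\Corr\) as well — and \(F\) is \(\Cst\)\nb-stable. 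Setting \(\bar{F}(A_i,\Hilm_{ij},u_{ijk})\defeq \bar{G}(0,\dotsc,n)\) yields \(\bar{F}\circ \Gamma=F\), and the surjectivity and injectivity of~\(\Gamma^*\) on homotopy groups follow from the same relative horn-filling argument, now run over \(N\Cstcat_\strict\times \partial\Simp{m}\) and \(N\Cstcat_\strict\times \Simp{m}\). The main obstacle is therefore not the homotopy theory, which is formal once the inputs are in place, but the bookkeeping that keeps every map produced by the subdivision construction inside \(\Cstcat_\strict\); the only substantive subtlety there is the strict continuity of the left-action and amplification maps, which rests entirely on the nondegeneracy of the correspondences involved.
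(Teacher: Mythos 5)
Your proposal is correct and follows essentially the same route as the paper: the paper's own proof of this theorem consists precisely of observing that \(i_{\Hilm}\), \(f_{\Hilm}\) and the maps \(f_{ST}\) are strictly continuous (that is, morphisms onto corners), after which the construction of \longref{Proposition}{pro:extend_to_Corrprop} and the horn-filling argument carry over verbatim. Your verification of strict continuity via nondegeneracy of the left actions and of the amplifications \(x\mapsto x\otimes\id\) supplies exactly the detail the paper leaves implicit.
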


We also get a variant of \longref{Theorem}{the:Corr_prop_universal},
where we replace the category~\(\Cstcat_+\) by its subcategory of all
strictly continuous \Star{}homomorphisms \(A\to B\).

An \(A,B\)-correspondence~\(\Hilm\) is called \emph{full} if the inner
products \(\braket{\xi}{\eta}\) for \(\xi,\eta\in\Hilm\) span~\(B\).
The full correspondences form a subbicategory of the correspondence
bicategory.  We call a strictly continuous homomorphism
\(\varphi\colon A\to\Mult(B)\) \emph{full} if the projection
\(p=\bar\varphi(1)\) is full, that is, the Hilbert module~\(p B\) is
full.  Equivalently, the closed linear span of~\(B p B\) is~\(B\).
This is a subcategory of~\(\Cstcat_\strict\), which we denote
by~\(\Cstcat_\full\).

The homomorphisms \(i_{\Hilm}\) and~\(f_{\Hilm}\) for a
correspondence~\(\Hilm\) in
\longref{Lemma}{lem:subdivision_back_to_corr} are full if~\(\Hilm\) is
full as a right Hilbert module, and the maps
\(f_{S T}\colon A_S \to A_T\) for an \(n\)\nb-simplex in \(N\Corr\)
are full if all~\(\Hilm_{i j}\) are full as right Hilbert modules.
Using this, the same arguments as above show that the simplicial map
\[
  \Gamma^*\colon \Map(N\Corr_\full, \Cat[D]) \to
  \Map_\Comp(N\Cstcat_\full,\Cat[D]),\quad
  F\mapsto F\circ \Gamma,
\]
is a homotopy equivalence.

For the bicategory \(\Corr_\full\cap \Corr_\prop\) of full, proper
correspondences, we get two versions of
\longref{Theorem}{the:Corr_prop_universal}: it is a Dwyer--Kan
localisation of the category of full \Star{}homomorphisms \(A\to B\)
or of the category of full, strictly continuous \Star{}homomorphisms
\(A\to B\).  Here a \Star{}homomorphism \(\varphi\colon A\to B\) is
called \emph{full} if \(B\varphi(A)B\) spans~\(B\); equivalently, the
correspondence~\(\Hilm_\varphi\) is full.

\begin{bibdiv}
\begin{biblist}
\bibselect{references}
\end{biblist}
\end{bibdiv}
\end{document}